\newtheorem{thm}{Theorem}
\newtheorem{lem}[thm]{Lemma}
\theoremstyle{definition}
\theoremstyle{remark}
\newcommand{\pth}[1]{\left( #1 \right)}
\newcommand{\hor}[1]{\left[ #1 \right)}
\newcommand{\set}[1]{\left\{ #1 \right\}}
\newcommand{\abs}[1]{\left| #1 \right|}
\newcommand{\floor}[1]{\left\lfloor #1 \right\rfloor}
\newcommand{\sumabs}[1]{\bigg| #1 \bigg|}
\newcommand{\sumpth}[1]{\bigg( #1 \bigg)}
\newcommand{\fracp}[2]{\pth{\frac{#1}{#2}}}
\newcommand{\mand}{\qquad\text{and}\qquad}
\newcommand{\ep}{\varepsilon}
\renewcommand{\phi}{\varphi}
\newcommand{\cole}{\mathcal{E}}\newcommand{\colm}{\mathcal{M}}\newcommand{\colt}{\mathcal{T}}
\pgfplotsset{compat=1.7}
\begin{document}
\bibliographystyle{abbrv}

\begin{center}
	{\large\bf THE FRACTIONAL SUM OF SMALL ARITHMETIC FUNCTIONS}\\[1.5em]
	{\scshape Joshua Stucky}
\end{center}
\vspace{.5em}

\begin{abstract}
Motivated by recent results, we study sums of the form
\[
S_f(x) = \sum_{n\leq x} f\pth{\floor{\frac{x}{n}}},
\]
where $f$ is an arithmetic function and $\floor{\cdot}$ denotes the greatest integer function. We show how the error term in the asymptotic formula for $S_f(x)$ can be improved in some specific cases.
\end{abstract}

\noindent\textbf{Keywords:} Fractional sum, integer part, exponent pairs, Euler phi function, divisor function.\\


\section{Introduction and Statement of Results}
Recently, there has been some interest in the sums
\[
S_f(x) = \sum_{n\leq x} f\pth{\floor{\frac{x}{n}}},
\]
where $f$ is an arithmetic function and $\floor{\cdot}$ denotes the greatest integer function. When $f(n) = n$, this is the classic Dirichlet divisor problem. Such sums do not seem to have a name in the literature yet, so we propose to call $S_f$ the ``fractional sum of $f$.'' In the present paper, we show how the error term in the asymptotic formula for $S_f(x)$ can be improved for some specific functions $f$ which are in some sense ``small.''

To motivate our results, we discuss some of the literature on the subject. If $f$ satisfies
\begin{equation}\label{eq:hyperbolaRequirement}
f(n) \ll n^\alpha(\log n)^\theta
\end{equation}
for some fixed $\alpha,\theta \geq 0$ with $\alpha < 1$, then Wu \cite{Wu2019} and Zhai \cite{Zhai2020} have independently used the hyperbola method to show that
\begin{equation}\label{eq:hyperbola}
S_f(x) = C_f x + O\pth{x^{\frac{1+\alpha}{2}}(\log x)^\theta},
\end{equation}
where
\[
C_f = \sum_{n=1}^\infty \frac{f(n)}{n(n+1)}.
\]
In particular, when $\abs{f(n)} \ll n^\ep$, we obtain
\begin{equation}\label{eq:trivialHalf}
	S_f(x) = C_f x  + O\pth{x^{\frac{1}{2}+\ep}}.
\end{equation}
The question of improving the error term beyond $\frac{1}{2}$ has been the subject of several papers (see \cite{Bordelles2020}, \cite{LiuWuYang2021}, \cite{MaSun2020}, and \cite{MaWu2020}, for instance). In particular, for $\tau$ the usual divisor-counting function and $\Lambda$ the von-Mangoldt function, we have
\[
S_\tau(x) = C_\tau x + O\pth{x^{\frac{19}{40}+\ep}} \mand S_\Lambda(x) = C_\Lambda x + O\pth{x^{\frac{9}{19}+\ep}},
\]
due to Bordell\`{e}s \cite{Bordelles2020} and Liu, Wu, and Yang \cite{LiuWuYang2021}, respectively (note $\frac{19}{40} = .475$ and $\frac{9}{19} =.4736...$). By applying a theorem of Jutila, we improve Bordell\`{e}s' estimate and prove

\begin{thm}\label{thm:511}
We have
\[
S_\tau(x) = C_\tau x + O(x^{5/11+\ep}).
\]
Note $\frac{5}{11} = 0.4545...$.
\end{thm}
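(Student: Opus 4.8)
The plan is to reduce $S_\tau(x) - C_\tau x$ to a single sawtooth sum and then estimate that sum by Fourier expansion, with Jutila's bound for exponential sums twisted by the divisor function entering at the decisive point. Grouping the terms of $S_\tau(x)$ according to the value $m = \floor{x/n}$ and writing $\psi(t) = t - \floor{t} - \tfrac12$ for the sawtooth function, one obtains the exact identity $S_\tau(x) = \sum_{m\ge 1}\tau(m)\pth{\floor{x/m} - \floor{x/(m+1)}}$; since $\sum_{m\ge1}\frac{\tau(m)}{m(m+1)} = C_\tau$ and the terms with $m > x$ contribute $O(x^\ep)$, this becomes
\[
S_\tau(x) = C_\tau x - R(x) + O(x^\ep), \qquad R(x) := \sum_{m\le x}\tau(m)\pth{\psi\pth{\tfrac xm} - \psi\pth{\tfrac x{m+1}}},
\]
so it suffices to prove $R(x) \ll x^{5/11+\ep}$. (This reduction is essentially the one underlying \eqref{eq:hyperbola}.)

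Next, fix a parameter $M_0$ with $x^{1/2} < M_0 \le x$ and write $R(x) = R_1(x) + R_2(x)$, where $R_1$ runs over $m \le M_0$. In $R_2$ the interval $(x/(m+1),\, x/m]$ has length $<1$, so $\psi(x/m) - \psi(x/(m+1))$ equals $x/(m(m+1)) = O(x/m^2)$ when this interval contains no integer and is $O(1)$ otherwise; summing the two cases gives
\[
R_2(x) \ll \frac{x^{1+\ep}}{M_0} + \sum_{\substack{n\ge1\\ \floor{x/n} > M_0}} \tau\pth{\floor{x/n}} \ll \frac{x}{M_0}\,x^\ep .
\]
For $R_1$ insert the truncated Fourier expansion $\psi(t) = -\sum_{1\le h\le H}\frac{\sin 2\pi h t}{\pi h} + O\!\pth{\min\!\pth{1, \tfrac{1}{H\norm{t}}}}$, where $\norm{t}$ is the distance from $t$ to $\Z$ and $H \ge 1$ is at our disposal. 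After a dyadic decomposition in $m$, the error terms contribute an acceptable amount (via a standard equidistribution estimate for $\{x/m\}$, which requires care since $M_0 > x^{1/2}$), and after shifting the summation index in the term coming from $\psi(x/(m+1))$, the main part of $R_1$ is bounded by
\[
x^\ep \sum_{h\le H} \frac 1h \max_{M \le M_0} \sumabs{\sum_{M < m \le 2M} \tau(m)\, e\!\pth{\tfrac{hx}{m}}},
\]
the maximum over dyadic $M$.

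The crux is a strong bound for the divisor-twisted sum $\sum_{M < m\le 2M}\tau(m)\,e(hx/m)$. Bordell\`es reaches the exponent $\tfrac{19}{40}$ by writing $\tau = \mathbf 1 \ast \mathbf 1$ and applying the theory of exponent pairs, which loses efficiency precisely in the range $M \approx x^{1/2}$ where the phase $hx/m$ has very small derivative; instead we invoke Jutila's theorem [cite], which estimates such sums directly through the Voronoi summation formula for $\tau$ and is superior in exactly that range. Feeding this bound into the display above, summing over $h \le H$, optimizing $H$, and choosing $M_0 \asymp x^{6/11}$ so as to balance $R_1$ against $R_2 \ll (x/M_0)x^\ep = x^{5/11+\ep}$, yields $R(x) \ll x^{5/11+\ep}$ and hence the theorem. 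The main obstacle is exactly the quality of this exponential sum estimate: with the exponent-pair bounds available to Bordell\`es one cannot push $M_0$ past roughly $x^{21/40}$, whereas Jutila's theorem is just strong enough to allow $M_0 \asymp x^{6/11}$; the remaining work — making the $\min(1, 1/(H\norm{x/m}))$ contributions and the truncation of the $m$-sum genuinely $\ll x^{5/11+\ep}$ when $M_0 > x^{1/2}$, and controlling the $h$-dependence so that summing over $h \le H$ does not erode the gain — is technical but routine.
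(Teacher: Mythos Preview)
Your approach is essentially the paper's: both expand $\psi$ via a truncated Fourier (Vaaler) approximation, feed the resulting divisor-twisted exponential sums into Jutila's theorem, and optimize the parameters to reach the exponent $5/11$. Your cutoff $M_0\asymp x^{6/11}$ is exactly the paper's $x/N$ with $N=x^{5/11}$, and your bound $R_2\ll (x/M_0)x^\ep$ recovers the paper's $\cole_2$, so the two decompositions are the same bookkeeping.

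Two small glitches are worth flagging. First, the ``shift'' you describe for the $\psi(x/(m+1))$ term does not work as stated, since $\tau(m-1)\neq\tau(m)$; the paper instead treats $\delta\in\{0,1\}$ separately and checks that the phase $g(n)=hx/(n+\delta)$ still satisfies Jutila's hypothesis \eqref{eq:gConditions}. Second, Jutila's side condition \eqref{eq:Fbound}, namely $F=hx/M\ll M^{3/2}$, fails once $M$ is small (already for $h=1$ one needs $M\gg x^{2/5}$), so you cannot apply Jutila uniformly over all dyadic $M\le M_0$; the paper handles this by the lower truncation at $N=x^{5/11}$ built into Lemma~\ref{lem:Bordelles} (the term $Nx^\ep$), and in your setup the trivial bound $\bigl|\sum_{m\asymp M}\tau(m)e(hx/m)\bigr|\ll Mx^\ep\le x^{5/11+\ep}$ suffices for $M\le x^{5/11}$, but this should be said explicitly.
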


We prove Theorem \ref{thm:511} in Section \ref{sec:511}. In a different direction, one can also improve the error term in (\ref{eq:hyperbola}) when $f$ is ``close to one,'' in a suitable sense. In this direction, Wu \cite{Wu2019} has shown that for $f = \phi$ the Euler phi function, we have
\begin{equation}\label{eq:WuPhi}
S_\phi(x) = C_\phi x + O(x^{1/3}(\log x)).
\end{equation}
In Section \ref{sec:Wu}, we generalize Wu's result and prove

\begin{thm}\label{thm:GeneralWu}
Suppose $f(n) = \sum_{d\mid n} g(d)$ and that
\begin{equation}\label{eq:gClose1}
\sum_{d\leq x} \abs{g(d)} \ll x^\alpha (\log x)^\theta
\end{equation}
for some $\alpha\in[0,1)$ and $\theta \geq 0$. Then
\[
S_f(x) = C_fx + O\pth{x^{\frac{1+\alpha}{3-\alpha}}(\log x)^\theta},
\]
where the implied constant depends only on $\alpha$. If $\alpha = 0$, then $\theta$ should be replaced by $\max(1,\theta)$.
\end{thm}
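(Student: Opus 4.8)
The strategy is to exploit the convolution identity $f = g * \mathbf{1}$ to write $S_f(x)$ as a double sum and then swap the order of summation so that the inner sum becomes a "plain" fractional sum over an interval, which can be evaluated very precisely. Concretely, since $f(m) = \sum_{d \mid m} g(d)$, we have
\[
S_f(x) = \sum_{n \leq x} \sum_{d \mid \floor{x/n}} g(d) = \sum_{n \leq x} \sum_{d} g(d) \mathbf{1}\Bigl[ d \mid \floor{x/n} \Bigr].
\]
The key observation — the same one underlying Wu's treatment of $\phi$ — is that $d \mid \floor{x/n}$ is equivalent to the existence of an integer $k$ with $dk \leq x/n < dk+1$, i.e. to $n$ lying in a union of intervals determined by $d$ and $k$. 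Interchanging, I would fix $d$, write $q = \floor{x/n}$, and count, for each value of $q$ divisible by $d$, the number of $n \leq x$ with $\floor{x/n} = q$; this count is $\floor{x/q} - \floor{x/(q+1)}$. Thus
\[
S_f(x) = \sum_{d \leq x} g(d) \sum_{\substack{q \leq x \\ d \mid q}} \Bigl( \floor{x/q} - \floor{x/(q+1)} \Bigr),
\]
and writing $q = d\ell$ the inner sum telescopes only partially, so one is led to $\sum_{\ell \leq x/d} \bigl( \floor{x/(d\ell)} - \floor{x/(d\ell+1)} \bigr)$, which we estimate by replacing each floor by its argument plus an error in $[0,1)$.

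The main term comes from $\sum_d g(d) \sum_\ell \bigl( \tfrac{x}{d\ell} - \tfrac{x}{d\ell+1} \bigr) = \sum_d g(d) \sum_\ell \tfrac{x}{d\ell(d\ell+1)}$; extending the $\ell$-sum to infinity and the $d$-sum to infinity and checking convergence against \eqref{eq:gClose1} (by partial summation, $\sum_{d > y}|g(d)|/d \ll y^{\alpha-1}(\log y)^\theta$, which is summable after the extra $1/\ell$ or $1/\ell^2$ from the inner sum), one recovers $C_f x$ with a tail error of the right shape. The error term is governed by two contributions: the tails just discussed, of size $O\bigl( x^\alpha (\log x)^\theta \cdot (x/d)^{1-\alpha}\cdots \bigr)$ type bounds, and the fractional-part errors $\sum_{d \leq D} |g(d)| \sum_{\ell \leq x/d} O(1)$, which is $O\bigl( x \sum_{d \leq D} |g(d)|/d \bigr)$ — too large unless we truncate. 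So the real device is a \textbf{parameter split at a level $D$}: for $d \leq D$ we use the precise telescoping/fractional-part analysis, and for $d > D$ we bound $S_f$ restricted to those $d$ trivially. Balancing the error from small $d$ (roughly $x D^{\alpha - 1}(\log x)^\theta$ after partial summation on $\sum_{d\le D}|g(d)|$ — wait, more carefully, the fractional error needs a smarter treatment, see below) against the tail from large $d$ (roughly $x^{1/2+\ep}$-type, or more precisely obtained from Wu/Zhai's \eqref{eq:hyperbola} applied to the piece) yields the optimal choice and the exponent $\frac{1+\alpha}{3-\alpha}$.

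The step I expect to be the genuine obstacle is handling the fractional-part error $\sum_{d\le D} g(d) \sum_{\ell \le x/d}\bigl(\{x/(d\ell+1)\} - \{x/(d\ell)\}\bigr)$ with more than the trivial bound $O(x \sum_{d\le D}|g(d)|/d)$: treated naively this contributes $\asymp x$, which is useless, so one must instead recognize that $\floor{x/(d\ell)} - \floor{x/(d\ell+1)}$ counts integers $n\in(x/(d\ell+1), x/(d\ell)]$ and sum this over \emph{all} $\ell$ simultaneously — i.e. go back to counting $n\le x$ with $d\mid\floor{x/n}$ directly. For fixed $d$, the number of such $n$ is $\sum_{k}\#\{n : x/(dk+1) < n \le x/(dk)\}$, and the total over $n\le x$ of the condition "$d \mid \floor{x/n}$" has main term $\sim x/d \cdot (\text{something})$ with error $O(x/d^2 + \sqrt{x/d})$ or so coming from a Dirichlet-hyperbola estimate on that single congruence class; it is this per-$d$ error of mixed shape $x^{?}/d^{?}$ that, weighted by $|g(d)|$ and summed via \eqref{eq:gClose1} and partial summation, must be balanced against the large-$d$ tail. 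Getting the two error pieces to have the \emph{same} exponent forces $D \asymp x^{(1-\alpha)/(3-\alpha)}$ and produces $\frac{1+\alpha}{3-\alpha}$; the logarithmic factor $(\log x)^\theta$ rides along from \eqref{eq:gClose1}, except when $\alpha=0$, where the partial-summation tail $\sum_{d>D}|g(d)|/d \ll (\log x)^{\max(1,\theta)}/D^{0}$ picks up an extra log (the $\ell$ or $d$ sum contributes a $\log$ that cannot be absorbed), forcing the stated replacement of $\theta$ by $\max(1,\theta)$. I would carry out the argument by first doing the clean $\alpha>0$ case, then remarking on the $\alpha=0$ modification.
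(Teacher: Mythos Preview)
Your plan correctly identifies the starting move --- unfold $f=g*1$ and swap sums so that the problem reduces to estimating, for each $d$, the count $N_d(x)=\#\{n\le x: d\mid\floor{x/n}\}$ and hence the $\psi$-error $E_d(x)=\sum_{k\le x/d}\bigl(\psi(x/(dk+1))-\psi(x/(dk))\bigr)$. This is exactly where the paper's proof lands too (it is Lemma~\ref{lem:decomp} followed by the divisor switch in $T(N)$). But the plan stalls at precisely the step you yourself flag as the obstacle, and the proposed fix does not work. Your truncation parameter $D$ is on the \emph{divisor} $d$; for $d>D$ the trivial bound $N_d(x)\ll x/d$ indeed gives $xD^{\alpha-1}(\log x)^\theta$. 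For $d\le D$, however, you need a nontrivial bound on $E_d(x)$, and ``a Dirichlet--hyperbola estimate on that single congruence class'' does not supply one: the individual full-range sums $\sum_{k\le y}\psi(y/k)$ carry a genuine main term $(\tfrac12-\gamma)y$, so each of $S_0,S_1$ is $\asymp x/d$, and you give no mechanism for cancellation in $S_1-S_0$. The asserted shape ``$O(x/d^2+\sqrt{x/d})$'' is unjustified, and even if one grants a divisor-problem-strength bound $E_d(x)\ll(x/d)^{1/3}$, balancing over $D$ does not reproduce the exponent $\tfrac{1+\alpha}{3-\alpha}$ for general $\alpha$.

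What actually drives the exponent in the paper is different in two essential ways. First, the cutoff is placed on $n=dk$ (the argument of $f$), not on $d$: one restricts to $A<n\le x/A$ via Lemma~\ref{lem:decomp}, and the trivial pieces $\cole_1,\cole_2$ together with the tail of $\colm$ contribute $x^\alpha A^{1-\alpha}(\log x)^\theta$. This truncation is what keeps the second (``trivial'') term $N^2x^{-1}d^{-1}$ in the exponent-pair bound \eqref{eq:TNexponentPair} under control --- summed dyadically it becomes $xA^{-2}$ rather than the hopeless $x$ you would get over the full range. Second, the surviving sums $\sum_{m\asymp N/d}\psi(x/(dm+\delta))$ are bounded by van~der~Corput exponent pairs, and no single pair works for all $\alpha$: the paper introduces the family $(k_n,l_n)$ obtained by iterating the $A$-process on $(\tfrac12,\tfrac12)$ (Lemma~\ref{lem:ExponentPairs}), assigns to each $\alpha$ the pair with $l_n/(k_n+1)$ just exceeding $\alpha$, and verifies via an explicit inequality that the resulting error is dominated by $\max(xA^{-2},\,x^\alpha A^{1-\alpha})$. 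Balancing those two forces $A=x^{(1-\alpha)/(3-\alpha)}$ and yields the theorem. This $\alpha$-dependent exponent-pair input is the missing ingredient in your plan; the cutoff on $d$ alone cannot replace the cutoff on $n$ that makes the exponent-pair machinery effective.
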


\noindent\textbf{Notation.} The symbols $O,o,\ll,\gg$ have their usual meanings. We use $n\asymp N$ to denote the condition $N < n \leq 2N$. The variable $\ep$ always denotes a positive arbitrarily small fixed real number, and $\delta$ is always 0 or 1. We also write $\psi(x) = x - \floor{x} - \frac{1}{2}$.

\section{A General Decomposition}

We begin with a general lemma that is useful in estimating the sums $S_f(x)$. The decomposition (\ref{eq:MainDecomp}) has appeared in more or less the same form in other works studying these sums (see  \cite{Wu2019} and \cite{Zhai2020}, for instance)

\begin{lem}\label{lem:decomp}
	Let $A,B\in[1,x^{1/2})$ be parameters to be chosen. Then
	\begin{equation}\label{eq:MainDecomp}
		S_f(x) = \colm(B) + O\Big(\abs{\colt_0(A,B)} + \abs{\colt_1(A,B)} + \cole_1(A) + \cole_2(B)\Big),
	\end{equation}
	where
	\[
	\begin{aligned}
		\colm(B) &= x \sum_{n \leq x/B} \frac{f(n)}{n(n+1)}, &&\cole_1(A) = \sum_{n \leq A} \abs{f(n)}, \\
		\colt_\delta(A,B) &= \sum_{A < n \leq x/B} f(n) \psi\fracp{x}{n+\delta}, \qquad &&\cole_2(B) = \sum_{n < B} \abs{f\pth{\floor{\frac{x}{n}}}}. 
	\end{aligned}
	\]
\end{lem}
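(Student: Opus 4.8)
The plan is to start from $S_f(x) = \sum_{n \leq x} f(\floor{x/n})$ and split the range of summation at $n = B$ (which is legitimate since $B \in [1, x^{1/2})$). The tail $n < B$ is handled trivially: each term is $f(\floor{x/n})$ in absolute value, and summing the absolute values gives exactly $\cole_2(B)$. For the main range $B \leq n \leq x$, the standard device is to swap the roles of $n$ and $m = \floor{x/n}$: for each value $m$ taken by $\floor{x/n}$, the $n$ with $\floor{x/n} = m$ form an interval, namely $x/(m+1) < n \leq x/m$, which contributes $f(m)$ times the number of integers in that interval. So I would write
\[
\sum_{B \leq n \leq x} f\pth{\floor{\tfrac{x}{n}}} = \sum_{m \leq x/B} f(m) \pth{\floor{\tfrac{x}{m}} - \floor{\tfrac{x}{m+1}}} + O(\cole_?),
\]
being careful with the boundary terms where $n = B$ or the interval is truncated; these boundary discrepancies are $O(\abs{f(\floor{x/B})})$ or similar and get absorbed into $\cole_2(B)$ (up to adjusting constants). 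The cleanest bookkeeping is probably to note $\sum_{m \le x}f(m)(\floor{x/m}-\floor{x/(m+1)})$ telescopes against $S_f$ so the two differ only by terms with $n < B$, each bounded by $\abs{f(\floor{x/n})}$.

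Next I would introduce $\psi$. Writing $\floor{t} = t - \tfrac12 - \psi(t)$, we get $\floor{x/m} - \floor{x/(m+1)} = \pth{\tfrac{x}{m} - \tfrac{x}{m+1}} - \psi\fracp{x}{m} + \psi\fracp{x}{m+1} = \tfrac{x}{m(m+1)} - \psi\fracp{x}{m} + \psi\fracp{x}{m+1}$. Multiplying by $f(m)$ and summing over $m \leq x/B$ produces three pieces: the main term $x \sum_{m \leq x/B} \tfrac{f(m)}{m(m+1)} = \colm(B)$, and two sums $-\sum_{m \leq x/B} f(m)\psi\fracp{x}{m}$ and $+\sum_{m \leq x/B} f(m)\psi\fracp{x}{m+1}$. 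To convert these last two into $\colt_1$ and $\colt_0$ (which start at $m > A$ rather than $m \geq 1$), I split off the range $m \leq A$: there $\abs{f(m)\psi(\cdot)} \leq \tfrac12\abs{f(m)}$, so $\sum_{m \leq A}$ of each is $O(\cole_1(A))$. The remaining tails $A < m \leq x/B$ are exactly $\colt_1(A,B)$ and $\colt_0(A,B)$ respectively (matching the definition $\colt_\delta = \sum_{A < n \leq x/B} f(n)\psi\fracp{x}{n+\delta}$).

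Collecting terms gives $S_f(x) = \colm(B) + O\pth{\abs{\colt_0(A,B)} + \abs{\colt_1(A,B)} + \cole_1(A) + \cole_2(B)}$, which is \eqref{eq:MainDecomp}. I expect the only genuinely fiddly step to be the boundary analysis in the $n$-to-$m$ interchange — specifically checking that the integer-counting intervals $(x/(m+1), x/m]$ for $m \le x/B$ exactly tile the $n$-range $(B', x]$ for the appropriate $B'$ near $B$, and that the mismatch near $n \approx B$ and near $n \approx x$ (where $m = 1$) is absorbed by $\cole_2(B)$; the condition $A, B < x^{1/2}$ ensures the ranges $m \le A$, $n < B$, and $A < m \le x/B$ are all non-degenerate. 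Everything else is the routine substitution $\floor{t} = t - \tfrac12 - \psi(t)$ and the trivial bound $\abs{\psi} \le \tfrac12$.
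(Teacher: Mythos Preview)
Your proposal is correct and follows essentially the same route as the paper: split the $n$-sum at $B$, reindex the range $n\geq B$ by $m=\floor{x/n}$ to get $\sum_{m\leq x/B} f(m)(\floor{x/m}-\floor{x/(m+1)})$, expand via $\floor{t}=t-\tfrac12-\psi(t)$ to extract $\colm(B)$ and the two $\psi$-sums, and then split off $m\leq A$ using $\abs{\psi}\leq\tfrac12$ to produce $\cole_1(A)$ and the $\colt_\delta$. The paper's proof is terser and simply asserts the reindexing identity without dwelling on the boundary near $n\approx B$, but your more careful accounting of that boundary is harmless and leads to the same conclusion.
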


\begin{proof}
	Since
	\[
	m = \floor{\frac{x}{n}} \qquad \iff \qquad \frac{x}{m+1} < n \leq \frac{x}{m},
	\]
	we can change variables and write
	\[
	\begin{aligned}
		S_f(x) &= \sum_{n < B} f\pth{\floor{\frac{x}{n}}} + \sum_{n \leq x/B} f(n) \pth{\floor{\frac{x}{n}} - \floor{\frac{x}{n+1}}} \\
		&= \sum_{n < B} f\pth{\floor{\frac{x}{n}}} + x\sum_{n \leq x/B} \frac{f(n)}{n(n+1)} + \sum_{n \leq x/B} f(n)\pth{\psi\pth{\frac{x}{n+1}} - \psi\pth{\frac{x}{n}}},
	\end{aligned}
	\]
	and the lemma follows.
	
\end{proof}

\section{Proof of Theorem \ref{thm:511}}\label{sec:511}

The proof of Theorem \ref{thm:511} is a combination of two ingredients. The first is 

\begin{lem}\label{lem:Bordelles}
	For $x$ sufficiently large, $f(n) \ll n^\ep$, and $N \in [x^{1/3},x^{1/2})$, we have for all $H \geq 1$
	\[
	\abs{S_f(x) - C_f x} \ll Nx^\ep + x^\ep \max_{\substack{N < D \leq x/N\\ \delta\in\set{0,1}}} \sumpth{\frac{D}{H} + \sum_{h\leq H}\frac{1}{h} \sumabs{\sum_{n\asymp D} f(n) e\fracp{hx}{n+\delta}}}.
	\]
\end{lem}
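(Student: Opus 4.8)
The plan is to start from the decomposition in Lemma~\ref{lem:decomp}, choosing $A = B = N$ with $N \in [x^{1/3}, x^{1/2})$. The pointwise bound $f(n) \ll n^\ep$ immediately gives $\cole_1(N) \ll N x^\ep$, and since $\floor{x/n} \leq x/n \leq x$ for $n \geq 1$, we also get $\cole_2(N) \ll \sum_{n < N} (x/n)^\ep \ll N x^\ep$. The main term $\colm(N)$ differs from $C_f x$ by $x \sum_{n > x/N} f(n)/n(n+1) \ll x \cdot (N/x) \cdot x^\ep \ll N x^\ep$, using partial summation and $f(n) \ll n^\ep$. So everything reduces to bounding $\colt_\delta(N, N) = \sum_{N < n \leq x/N} f(n)\psi(x/(n+\delta))$ for $\delta \in \{0,1\}$.

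Next I would dyadically decompose the range $N < n \leq x/N$ into $O(\log x)$ intervals $n \asymp D$ with $N \leq D \leq x/N$, so that $\colt_\delta(N,N) \ll x^\ep \max_D |\sum_{n \asymp D} f(n)\psi(x/(n+\delta))|$. On each such interval I would expand $\psi$ into a truncated Fourier series: the standard estimate (see Vaaler, or the classical van der Corput treatment) gives, for any $H \geq 1$,
\[
\psi(t) = \sum_{1 \leq h \leq H} \frac{c_h}{h}\, e(ht) + O\Big( \min\big(1, \tfrac{1}{H\|t\|}\big) \Big)
\]
with coefficients $c_h \ll 1$; more precisely one uses a Fej\'er-kernel smoothing so that the error term, when summed over $n \asymp D$ against the slowly-varying $x/(n+\delta)$, contributes $O(D/H + \log x)$ by a standard argument counting how often $x/(n+\delta)$ lies near an integer. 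This is where the $D/H$ term in the statement comes from. The main term of the Fourier expansion contributes
\[
\sum_{1 \leq h \leq H} \frac{c_h}{h} \sum_{n \asymp D} f(n)\, e\Big(\frac{hx}{n+\delta}\Big),
\]
and taking absolute values and using $|c_h| \ll 1$ yields exactly the $\sum_{h \leq H} \frac{1}{h} |\sum_{n \asymp D} f(n) e(hx/(n+\delta))|$ term. Substituting $n \mapsto n - \delta$ shifts the range of summation by at most $1$, which is harmless at the level of the stated estimate (it can be absorbed into the $\ll$ and the $\max$ over $\delta$), so I do not need to worry about whether it is $n$ or $n + \delta$ inside the exponential versus the summation condition.

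The main obstacle is the error-term analysis in the Fej\'er/Vaaler expansion of $\psi$: one must verify that $\sum_{n \asymp D} \min(1, 1/(H\|x/(n+\delta)\|))$ is $\ll D/H + \log x$ (up to the $x^\ep$ already present). The cleanest route is to invoke the Vaaler approximation directly --- there exist trigonometric polynomials $\psi^{\pm}_H$ of degree $\leq H$ with $\psi^-_H \leq \psi \leq \psi^+_H$, whose Fourier coefficients $\widehat{\psi^{\pm}_H}(h)$ satisfy $|\widehat{\psi^{\pm}_H}(h)| \ll 1/|h|$, and such that $\int_0^1 (\psi^+_H - \psi^-_H) = 1/(H+1)$. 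Sandwiching $\sum_{n\asymp D} f(n)\psi(x/(n+\delta))$ between the corresponding sums against $\psi^\pm_H$ (valid after noting $f(n) \ll n^\ep \geq 0$ can be handled by splitting into the constant-sign pieces, or more simply by just bounding $|f(n)| \leq n^\ep$ throughout since we only want an upper bound on the absolute value), the $h = 0$ term gives $\ll (D/H) x^\ep$ and the $h \neq 0$ terms give the claimed exponential sums. Assembling the dyadic pieces and the two values of $\delta$ into a single maximum, and absorbing all the $\log x$ factors and the shift $n \leftrightarrow n+\delta$ into $x^\ep$, yields the lemma.
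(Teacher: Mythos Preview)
Your proposal is correct and follows exactly the route the paper indicates: apply Lemma~\ref{lem:decomp} with $A=B=N$, bound $\colm-C_f x$, $\cole_1$, $\cole_2$ by $Nx^\ep$, dyadically split $\colt_\delta$, and insert Vaaler's approximation for $\psi$. One small caution: your ``more simply just bound $|f(n)|\le n^\ep$ throughout'' alternative would lose the $f$ in the exponential sums, so you should stick with your first suggestion of splitting $f$ into constant-sign pieces (which then yields exponential sums with $f^{\pm}$ or $|f|$ rather than $f$; this is harmless since any estimate used downstream depends only on $|f(n)|\ll n^\ep$).
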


This lemma is originally due to Bordell\`{e}s \cite{Bordelles2020} and follows from Lemma \ref{lem:decomp} by inserting Vaaler's approximation for $\psi$ (see the apprendix of \cite{GrahamKolesnik} for a discussion of Vaaler's approximation).

The second ingredient is due to Jutila. The full statement of the result we need is somewhat lengthy, so we will state only the specific version we require. For a full statement and proof of this theorem, see Theorem 4.6 of \cite{Jutila}.\\

\begin{thm}\label{thm:Jutila}
	Let $2\leq M\leq M' \leq 2M$, and let
	\begin{equation}\label{eq:gConditions}
		g(z) = \frac{B}{z}\pth{1+O(F^{-1/3})}
	\end{equation}
	be a holomorphic function in the domain
	\[
	D = \set{z : \abs{z-x} < cM\ \text{for some}\ x\in[M,M']},
	\]
	where $c$ is some positive constant and
	\[
	F = \frac{\abs{B}}{M}.
	\]
	Suppose also that 
	\begin{equation}\label{eq:Fbound}
		M^{3/4} \ll F \ll M^{3/2}.
	\end{equation}
	Then
	\[
	\sumabs{\sum_{M\leq n\leq M'} \tau(n) e(g(n))} \ll M^{1/2}F^{1/3+\ep}.
	\]
\end{thm}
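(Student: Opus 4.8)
The plan is to deduce this from Voronoi summation for the divisor function $\tau$ — the ``$B$-process'' of van der Corput's method, adapted to divisor sums — together with stationary phase; I only sketch the argument, since a complete proof is Theorem~4.6 of \cite{Jutila}. First I would replace the sharp cutoff to $[M,M']$ by a smooth weight $w$ supported on a slightly larger interval and equal to $1$ on $[M,M']$, the transition ranges contributing $\ll M^{1/2}F^{1/3+\ep}$ thanks to $F\gg M^{3/4}$. Applying Voronoi summation to $\sum_n \tau(n)e(g(n))w(n)$ writes it as a main term $\int(\log t+2\gamma)e(g(t))w(t)\,dt$ plus a dual sum $\sum_{m\ge 1}\tau(m)\int e(g(t))w(t)\,\alpha(mt)\,dt$, where $\alpha$ is the standard combination of the Bessel functions $Y_0$ and $K_0$. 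Since $\abs{g'(z)}\asymp\abs{B}/M^2=F/M$ is bounded away from $0$ on the support of $w$, the main term has no stationary point and, after enough integrations by parts, is $\ll M^{\ep}$, hence negligible.

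The core is the dual sum. Inserting $\alpha(y)=c_1y^{-1/4}e(2\sqrt y)+c_2y^{-1/4}e(-2\sqrt y)+O(y^{-3/4})$ (the $K_0$-part being exponentially small), each inner integral becomes an exponential integral with phase $g(t)\pm2\sqrt{mt}$ and amplitude $\asymp(mt)^{-1/4}$. One sign never produces a stationary point on the support of $w$ and is removed by repeated integration by parts; the other has a stationary point $t_m\asymp(\abs{B}/\sqrt m)^{2/3}$, which lies in $[M,M']$ precisely when $m\asymp F^2/M$ (and $F^2/M\ge 1$ because $F\gg M^{3/4}$). Here the hypothesis $g(z)=\tfrac{B}{z}(1+O(F^{-1/3}))$ is exactly what is needed: the phase has $\phi''(t_m)\asymp m/\abs{B}\asymp F/M^2$, so its stationary-phase expansion is governed by the scale $(\phi'')^{-1/2}\asymp M/F^{1/2}$, and the accuracy $F^{-1/3}$ is what prevents the perturbation of $g$ away from $B/z$ from disrupting that evaluation. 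Carrying this out reduces the dual sum, up to negligible and lower-order terms, to
\[
\frac{M}{F}\sum_{m\asymp F^2/M}\tau(m)\,e(\Psi(m)),
\]
where $\Psi'(m)\asymp F/(F^2/M)$ and $\Psi''(m)\asymp F/(F^2/M)^2$ — i.e.\ a divisor sum against a van der Corput phase of size $\asymp F$ over an interval of length $\asymp F^2/M$. The decisive structural fact is that its parameters $(F^2/M,\,F)$ satisfy the \emph{same} constraints $(F^2/M)^{3/4}\ll F\ll (F^2/M)^{3/2}$: these two inequalities are literally equivalent to $F\ll M^{3/2}$ and $F\gg M^{3/4}$, which explains the shape of the hypothesis (\ref{eq:Fbound}).

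It then remains to show $\sum_{m\asymp F^2/M}\tau(m)e(\Psi(m))\ll(F^2/M)^{1/2}F^{1/3+\ep}$, which on multiplying by $M/F$ yields exactly $M^{1/2}F^{1/3+\ep}$. For this I would split $\tau(m)=\sum_{d\mid m}1$ by Dirichlet's hyperbola method and apply van der Corput's $A$-process (differencing) together with a second-derivative estimate, exploiting cancellation in both hyperbola variables; in \cite{Jutila} this is instead organized as a further iteration of the transformation with a differencing step interleaved. I expect this last step to be the genuine obstacle: a second application of Voronoi (to the reduced sum) is essentially self-dual — it returns a sum of the original shape over an interval of length $\asymp M$ — so iterating the transformation alone gains nothing, and one must inject an honest differencing/second-derivative estimate for divisor sums, uniform across the whole range $M^{3/4}\ll F\ll M^{3/2}$. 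The remaining bookkeeping — the smoothing, the truncation of the (potentially very long, up to $m\asymp M^2$) dual sum, the $O(F^{-1/3})$ perturbation of $g$, and the subleading terms in the stationary-phase expansions — is the more routine part.
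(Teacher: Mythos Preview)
The paper does not prove this theorem at all: it is quoted as a black box, with the sentence ``For a full statement and proof of this theorem, see Theorem~4.6 of \cite{Jutila}.'' So there is no proof in the paper to compare against; your sketch is already far more than the paper supplies.

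That said, your outline is an accurate summary of Jutila's argument. The transformation you describe --- Voronoi summation for $\tau$ followed by stationary phase --- is exactly Jutila's Theorem~4.2/4.7 (the ``$B$-process for divisor sums''), and your observation that the dual sum has parameters $(F^2/M,F)$ satisfying the same constraints $M^{3/4}\ll F\ll M^{3/2}$ is correct and is indeed the structural reason behind the hypothesis~(\ref{eq:Fbound}). Your diagnosis of the genuine obstacle is also right: a second Voronoi step is self-dual and gains nothing, so Jutila interleaves a Weyl differencing step (his Theorem~4.5) before estimating, which is what produces the exponent $1/3$. The only place I would tighten the sketch is the claim that the hyperbola method plus a single second-derivative bound suffices for the reduced sum uniformly across the full range; in Jutila's treatment this step is more delicate and is organized as differencing followed by the transformation again, rather than a bare van der Corput estimate.
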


We are now ready to prove Theorem \ref{thm:511}. From Lemma \ref{lem:Bordelles}, we need to estimate the sums
\[
\sum_{n\asymp D} \tau(n) e\fracp{hx}{n+\delta}.
\]
In Theorem \ref{thm:Jutila}, we take
\[
M=D,\quad M'=2D,\quad g(n) = \frac{hx}{n+\delta}, \quad B = hx,\quad F = \frac{hx}{D}.
\]
Then the condition (\ref{eq:Fbound}) becomes
\[
\frac{D^{7/4}}{x} \ll h \ll \frac{D^{5/2}}{x},
\]
which is satisfied for all integers $h \in [1,H]$ so long as
\begin{equation}\label{eq:hConditions}
	D \ll x^{4/7-\ep} \mand H \ll \frac{D^{5/2}}{x}.
\end{equation}
Theorem \ref{thm:Jutila} then gives
\[
\sumabs{\sum_{n\asymp D} \tau(n) e\fracp{hx}{n+\delta}} \ll D^{1/6} h^{1/3}x^{1/3+\ep}.
\]
Summing over $h \leq H$, we have
\[
\abs{S_\tau(x) - C_\tau x } \ll N x^\ep +x^\ep\max_{N < D \leq x/N} \sumpth{\frac{D}{H} + D^{1/6} H^{1/3}x^{1/3+\ep}} \ll x^\ep \pth{N + \frac{x}{NH} + \frac{H^{1/3}}{N^{1/6}} x^{1/2}}.
\]
We complete the proof by choosing
\[
H = x^{3/8}N^{-5/8} \mand N = x^{5/11},
\]
which also ensures both conditions of (\ref{eq:hConditions}) are satisfied for all $D$ considered in the maximum, and that (\ref{eq:gConditions}) is satisfied.

\section{Proof of Theorem \ref{thm:GeneralWu}}\label{sec:Wu}

We follow Wu's \cite{Wu2019} method of proving (\ref{eq:WuPhi}). Let $A\in[1,x^{1/2})$ be a parameter to be chosen and use Lemma \ref{lem:decomp} with $B = A$. This gives
\[
S_f(x) = \colm + O\Big(\abs{\colt_0} + \abs{\colt_1} + \cole_1 + \cole_2\Big)
\]
with
\[
\begin{aligned}
\colm &= x \sum_{n \leq x/A} \frac{f(n)}{n(n+1)}, &&\cole_1 = \sum_{n \leq A} \abs{f(n)}, \\
\colt_\delta &= \sum_{A < n \leq x/A} f(n) \psi\fracp{x}{n+\delta}, \qquad &&\cole_2 = \sum_{n < A} \abs{f\pth{\floor{\frac{x}{n}}}}. 
\end{aligned}
\]
Note that (\ref{eq:gClose1}) trivially implies (\ref{eq:hyperbolaRequirement}) with the same $\alpha$ and $\theta$. Thus
\[
\colm = C_f x + O\sumpth{x\sum_{n > x/A} \frac{n^\alpha(\log n)^\theta}{n^2}} = C_f x + O(x^{\alpha}A^{1-\alpha}(\log x)^\theta),
\]
and likewise
\[
\cole_1 \ll A^{1+\alpha}(\log x)^\theta \mand \cole_2 \ll x^\alpha A^{1-\alpha} (\log x)^\theta.
\]
We have $A^{1+\alpha} < x^\alpha A^{1-\alpha}$ since $A < x^{1/2}$, and thus
\begin{equation}\label{eq:Sfdecomp}
S_f(x) = C_f x + O\pth{\abs{\colt_0} + \abs{\colt_1} + x^\alpha A^{1-\alpha}(\log x)^\theta}.
\end{equation}
We break $\colt_\delta$ (where $\delta$ is 0 or 1) into $O(\log x)$ sums of the form
\[
T(N) = \sum_{n\asymp N} f(n) \psi\fracp{x}{n+\delta},
\]
where $A\ll N \ll x/A$. Switching divisors, this is
\[
T(N) = \sum_{d\leq 2N} g(d) \sum_{n\asymp N/d} \psi\fracp{x}{dn+\delta}.
\]
We now employ Lemma 4.3 of \cite{GrahamKolesnik}. For any exponent pair $(k,l)$, we have
\begin{equation}\label{eq:TNexponentPair}
T(N) \ll \sum_{d\leq 2N} \abs{g(d)} \sumpth{x^{k/(k+1)} N^{(l-k)/(k+1)} d^{-l/(k+1)} + N^2x^{-1}d^{-1}}.
\end{equation}
The goal now is to choose an exponent pair $(k,l)$ such that, after applying partial summation and (\ref{eq:gClose1}), the first term on the right is dominated by the other error terms. For this, we need a sequence of specific exponent pairs given by

\begin{lem}\label{lem:ExponentPairs}
For any integer $n \geq 0$,
\[
(k_n,l_n) = \pth{\frac{1}{2^{n+2}-2}, \frac{2^{n+2}-n-3}{2^{n+2}-2}}
\]
is an exponent pair.
\end{lem}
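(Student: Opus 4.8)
The plan is to prove this by induction on $n$, building $(k_n,l_n)$ up from the trivial exponent pair $(0,1)$ by means of the two classical processes on exponent pairs. Recall (see Chapter 3 of \cite{GrahamKolesnik}) that $(0,1)$ is an exponent pair, and that if $(k,l)$ is an exponent pair then so are $B(k,l) = \pth{l-\half,\,k+\half}$ and $A(k,l) = \pth{\frac{k}{2k+2},\,\frac{k+l+1}{2k+2}}$.

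For the base case $n=0$ I would simply note that $(k_0,l_0) = \pth{\half,\half} = B(0,1)$, so it is an exponent pair. For the inductive step, suppose $(k_n,l_n)$ is as claimed and write $D_n = 2^{n+2}-2$, so that $k_n = 1/D_n$ and $l_n = (D_n-n-1)/D_n$. I would apply the $A$-process to $(k_n,l_n)$ and verify, using the relation $D_{n+1} = 2D_n+2$ between consecutive denominators (equivalently $D_n+1 = 2^{n+2}$), that $2k_n+2 = D_{n+1}/D_n$, so the first coordinate of $A(k_n,l_n)$ is $1/D_{n+1} = k_{n+1}$; and that $k_n+l_n+1 = (2D_n-n)/D_n$ with $2D_n-n = D_{n+1}-(n+2)$, so the second coordinate is $(D_{n+1}-(n+2))/D_{n+1} = l_{n+1}$. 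Hence $A(k_n,l_n) = (k_{n+1},l_{n+1})$, and the induction is complete; equivalently, $(k_n,l_n) = A^n B(0,1)$.

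There is no genuine obstacle here: the entire content is the elementary algebraic identity $A(k_n,l_n) = (k_{n+1},l_{n+1})$, and the only point worth isolating is the doubling relation $D_{n+1} = 2D_n+2$, which makes every denominator cancellation in the $A$-process immediate.
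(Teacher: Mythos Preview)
Your proof is correct and is exactly the paper's approach with the details filled in: the paper's one-line proof simply says ``Repeatedly apply the $A$ process to the pair $(\half,\half)$,'' and your induction makes this explicit via $A(k_n,l_n)=(k_{n+1},l_{n+1})$. (One harmless slip: $D_n+1=2^{n+2}-1$, not $2^{n+2}$, but this parenthetical is never used in your argument.)
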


\begin{proof}
Repeatedly apply the $A$ process to the pair $(\frac{1}{2},\frac{1}{2})$.

\end{proof}

Note that
\[
\frac{l_n}{k_n+1} = \frac{2^{n+2}-n-3}{2^{n+2}-1} = 1 - \frac{n+2}{2^{n+2}-1},
\]
and so $l_n/(k_n+1)$ is strictly increasing as $n\to\infty$. We define $(k_{-1},l_{-1}) = (1,0)$ (which is not an exponent pair) and divide the interval $[0,1)$ into subintervals
\[
I_n = \hor{\frac{l_{n-1}}{k_{n-1}+1}, \frac{l_n}{k_n+1}}, \qquad n\geq 0.
\]
Suppose that $\alpha \in I_n$. Partial summation and (\ref{eq:gClose1}) give
\begin{equation}\label{eq:PartialSummation}
\sum_{d\leq 2N} \frac{\abs{g(d)}}{d^{l_n/(k_n+1)}} \ll 1 \mand \sum_{d\leq 2N} \frac{\abs{g(d)}}{d} \ll 1
\end{equation}
since $\alpha < l_n/(k_n+1) < 1$. From (\ref{eq:TNexponentPair}) and (\ref{eq:PartialSummation}), we have
\[
T(N) \ll x^{k_n/(k_n+1)} N^{(l_n-k_n)/(k_n+1)} + N^2x^{-1}.
\]
Executing the dyadic sum in $N$ gives
\[
\begin{aligned}
S_f(x) &= C_f x + O\pth{x^{l_n/(k_n+1)}A^{(k_n-l_n)/(k_n+1)}(\log x)^{\delta(n=0)} + xA^{-2} + x^{\alpha} A^{1-\alpha} (\log x)^\theta} \\
&= C_f x + O\pth{E_1(\log x)^{\delta(n=0)}+E_2+E_3},
\end{aligned}
\]
where $\delta(n=0)$ is 1 if $n=0$ and $0$ otherwise. If $\alpha = 0$, then $l_n=k_n = \frac{1}{2}$ and we have
\[
S_f(x) = C_f x + O\pth{x^{1/3}(\log x) + xA^{-2} + A (\log x)^\theta}.
\]
Choosing $A = x^{1/3}$ gives the desired result with $\max(1,\theta)$ in place of $\theta$. Suppose now that $\alpha > 0$. We will show that $E_1$ is always dominated by $E_2$ or $E_3$.  We have $E_1 \leq E_2$ if $A \leq x^{B_1(n)}$, where
\[
B_1(n)= \frac{k_n-l_n+1}{3k_n-l_n+2} = \frac{n+2}{n+2^{n+2}+2},
\]
and likewise $E_1 \leq E_3$ if $A \geq x^{B_2(n)}$, where
\[
B_2(n) = \frac{\frac{l_n}{k_n+1}-\alpha}{\frac{l_n}{k_n+1}-\alpha+\frac{1}{k_n+1}} \leq \frac{\frac{l_n}{k_n+1}-\frac{l_{n-1}}{k_{n-1}+1}}{\frac{l_n}{k_n+1}-\frac{l_{n-1}}{k_{n-1}+1}+\frac{1}{k_n+1}} = \frac{n+2^{-n-1}}{ n+2^{n+2} + 3(2^{-n-1}) -4}.
\]
The inequality $B_2(n) \leq B_1(n)$ for $n=0,1,2$ may be verified by direct computation. For $n \geq 3$, we have
\[
\begin{aligned}
\frac{B_1(n)}{B_2(n)} \geq \fracp{n+2}{n+2^{-n-1}} \fracp{n+2^{n+2}+3(2^{-n-1}) -4}{n+2^{n+2}+2} \geq \pth{1+\frac{1}{n}}\pth{1-\frac{1}{2^{n-1}}} \geq 1
\end{aligned}
\]
We thus have $E_1 \leq \max(E_2,E_3)$. In the case $n=0$, since $\alpha\neq 0$, we actually have $E_1 \leq x^{-\lambda}\max(E_2,E_3)$ for some sufficiently small positive $\lambda$, and thus we may ignore the factor $(\log x)^{\delta(n=0)}$. For $\alpha > 0$, we thus have
\[
S_f(x) = C_f x + O\pth{(xA^{-2} + x^{\alpha} A^{1-\alpha})(\log x)^\theta}.
\]
Choosing $A = x^{(1-\alpha)/(3-\alpha)}$ completes the proof.

\section{Remarks on Theorem \ref{thm:GeneralWu}}

Theorem \ref{thm:GeneralWu} is generally good when $g$ is such that either $\abs{g}$ is small or $g$ is supported on a sparse set. For instance, let $\beta \in (0,1]$ and let $\sigma_\beta$ denote the sum of $\beta$th powers of divisors, $\sigma_\beta(n) = \sum_{d\mid n} d^\beta$. For $f(n) = \sigma_\beta(n) n^{-\beta}$, we have $f(n) \ll n^\ep$, and thus (\ref{eq:trivialHalf}) holds. However, we also have (\ref{eq:gClose1}) with $\alpha = 1-\beta$ and $\theta = 0$ (unless $\beta = 1$, in which case $\theta=1$). Thus
\[
S_f(x) = C_f x + O\pth{x^\frac{2-\beta}{2+\beta}(\log x)^{\delta(\beta=0)}},
\]
This is superior to (\ref{eq:hyperbola}) so long as $\beta \geq \frac{2}{3}$.

On the other hand, it is possible for Theorem \ref{thm:GeneralWu} to give a worse result than (\ref{eq:hyperbola}). For instance, let $f$ be the indicator function for the squarefree numbers so that $f(n) = \sum_{d\mid n} g(d)$ with
\[
g(d) = \begin{cases}
\mu(l) & \text{if $d=l^2$}, \\
0 & \text{otherwise},
\end{cases}
\]
We have
\[
\sum_{d\leq x} \abs{g(d)} = \sum_{l\leq \sqrt{x}} \mu^2(l) \asymp \sqrt{x},
\]
and so Theorem \ref{thm:GeneralWu} only gives
\[
S_f(x) = C_f x + O\pth{x^{\frac{3}{5}}}.
\]
More generally, if $f$ is the indicator function of the $k$-free numbers, then the same argument yields
\[
S_f(x) = C_f x + O\pth{x^{\pth{1+\frac{1}{k}}\pth{3-\frac{1}{k}}^{-1}}},
\]
which is superior to (\ref{eq:hyperbola}) so long as $k > 3$. 
\bibliography{references}
\end{document}